\documentclass[12pt]{article}
\usepackage{amsfonts}

\usepackage[all]{xy}


\def\squarebox#1{\hbox to #1{\hfill\vbox to #1{\vfill}}}
\newcommand{\qed}{\hspace*{\fill}
\vbox{\hrule\hbox{\vrule\squarebox{.667em}\vrule}\hrule}\smallskip}
\newtheorem{teorema}{Theorem}[section]
\newtheorem{lema}[teorema]{Lemma}
\newtheorem{corolario}[teorema]{Corollary}
\newtheorem{proposicao}[teorema]{Proposition}

\newtheorem{remark}[teorema]{Remark}
\newenvironment{proof}{\noindent {\bf Proof:}}{\hfill $\qed $ \newline}

\begin{document}

\title{A note on periodic differential equations}
\author{Mauro Patr\~{a}o}
\maketitle

\begin{abstract}
Let $F$ be a Banach space and $L(F)$ be the set of all its bounded
linear operators. In this note, we are interested in the
asymptotic behavior (recurrence and chain recurrence) of the
solution of the following initial value problem
\begin{equation}\label{eqlinear}
x'(t) = X(t)x(t), \qquad x(0) = x,
\end{equation}
where $x \in F$ and the map $t \mapsto X(t) \in L(F)$ is a
$T$-periodic continuous curve. This asymptotic behavior is related
to the asymptotic behavior of the discrete-time flow on $F$
generated by the invertible operator $g \in L(F)$ given by the
associated fundamental solution at time $T$.
\end{abstract}

\noindent \textit{AMS 2000 subject classification}: Primary:
37B35, 34G10. Secondary: 34A30.

\noindent \textit{Key words:} Recurrence, chain transitivity,
stable sets, Banach space.

\section{Introduction}

Denote by $|\cdot|$ the norm of the Banach space $F$. The
canonical operator norm in $L(F)$ is also denoted by $|\cdot|$. It
is well known that the solution of (\ref{eqlinear}) is given by
$x(t) = g(t)x$, where $t \mapsto g(t) \in L(F)$ is a
differentiable curve, called \emph{the associated fundamental
solution}, satisfying the following initial value problem
\[
g'(t) = X(t)g(t), \qquad g(0) = I,
\]
where $I$ is the identity operator. The existence and the
uniqueness of the fundamental solution defined for all $t \in
\mathbb{R}$ follow from the Picard method, since $X(t)$ is
uniformly bounded and $L(F)$ is a Banach space. We can use this
fact to show that the inverse of $g(s)$ is given by $h(-s)$, where
$t \mapsto h(t)$ is the fundamental solution associated to $t
\mapsto X(t+s)$. We can assume that the period $T = 1$, since
otherwise we can replace $X(t)$ by $(1/T)X(t/T)$ whose associated
fundamental solution is given by $h(t) = g(t/T)$. The existence
and the uniqueness of the fundamental solution together with the
periodicity of $X(t)$ can be used to prove that
\begin{equation}\label{eqgtgn}
g(t+n) = g(t)g^n,
\end{equation}
for all $t \in \mathbb{R}$ and all $n \in \mathbb{N}$, where $g =
g(1)$.

We denote by $S^1$ the quotient $\mathbb{R}/\mathbb{Z}$ and by
$\overline{s}$ the class $s + \mathbb{Z}$, where $s \in
\mathbb{R}$. Using equation (\ref{eqgtgn}), the so called
\emph{associated skew-product flow in $S^1 \times F$}, defined by
\[
\phi^t(\overline{s},x) = (\overline{s+t},g(t+s)g(s)^{-1}x),
\]
is a well defined continuous map such that $\phi^t(\overline{0},x)
= (\overline{t},x(t))$. When $F$ is finite dimensional, Floquet
theorem (see e.g \cite{chi}) ensures that there exists $X \in
L(F)$ such that $g^2 = \mbox{e}^X$. In this case, we have that the
associated skew-product is conjugated with the flow given by
\[
\psi^t(\overline{s},x) = (\overline{s+t},\mbox{e}^{tX}x).
\]
On the other hand, when $F$ is infinite dimensional, there is not
in general any $X \in L(F)$ such that $g^n = \mbox{e}^X$, for some
$n \in \mathbb{N}$. Thus we can not always analyze the asymptotic
behavior of the solution $x(t) = g(t)x$ by considering the
asymptotic behavior of an exponential flow in the same way which
is done by the Floquet theory.

In this note, we provide an approach which overcome this
difficult, relating the asymptotic behavior of the associated
skew-product flow with the asymptotic behavior (recurrence and
chain recurrence) of the discrete-time flow on $F$ generated by
the invertible operator $g$. The presented results can be viewed
as a sort of extension to the infinite dimensional situation of
the Selgrade Theorem (see \cite{sel}), which deals with linear
flows on finite dimensional vector bundles over chain-transitive
flows on compact base spaces. As we observe in the end of this
note, all the results remain true if we replace the fiber given by
Banach space $F$ by any compact metrizable fiber where the
topological group of the bounded invertible linear operators acts
continuously. In this way, the results can also be viewed as a
sort extension of the results presented in \cite{bsm} and
\cite{msm}, which deal with flows of automorphisms of flag bundles
over chain-transitive flows on compact base spaces.

\section{Some technical results}

In this section, we prove some technical results concerning the
fundamental solution and the associated skew-product flow. We
observe that, by the definitions, it follows that
\begin{equation}\label{eqphig}
\phi^t(\overline{s},g(s)x) = (\overline{s+t},g(s+t)x).
\end{equation}
for all $s,t \in \mathbb{R}$ and $x \in F$.

\begin{lema}\label{lemphisgsx}
We have that
\[
S^1 \times F = \{(\overline{s},g(s)x) : \, s \in [0, 1), x \in
F\}.
\]
Furthermore, for each $s \in [0, 1)$ and all $t \geq 0$, there
exist $\tau \in (-1,1)$ and $n \in \mathbb{N}$ such that
\[
t = \tau + n \qquad \mbox{and} \qquad s+\tau \in [0,1).
\]
In this case, we have that
\begin{equation}\label{eqphisgsx}
\phi^t(\overline{s},g(s)x) = (\overline{s+\tau}, g(s+\tau)g^n x).
\end{equation}
\end{lema}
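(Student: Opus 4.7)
The plan is to prove the three assertions in order, each reducing to arithmetic or a direct application of a formula established earlier in the paper.

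For the first assertion, the inclusion from right to left is immediate. For the reverse, given an arbitrary $(\overline{s}, y) \in S^1 \times F$, choose the canonical representative $s \in [0,1)$ of $\overline{s}$. Since the fundamental solution is defined for all real times and the introduction establishes that $g(s)$ is invertible (with inverse $h(-s)$), the vector $x = g(s)^{-1}y \in F$ satisfies $y = g(s)x$, giving the desired description.

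For the second assertion, we carry out the elementary arithmetic. We seek $n \in \mathbb{N}$ with $\tau := t - n$ satisfying $s + \tau \in [0, 1)$, i.e. $\tau \in [-s, 1-s)$, which is equivalent to $n \in (t+s-1, \, t+s]$. This half-open interval of length one contains a unique integer, and this integer is non-negative because $t + s \geq 0$, so $n \in \mathbb{N}$. With this choice, $\tau \geq -s > -1$ (using $s < 1$) and $\tau < 1 - s \leq 1$, so indeed $\tau \in (-1,1)$ and $s + \tau \in [0,1)$ as required.

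For the third assertion, we combine (\ref{eqphig}) with (\ref{eqgtgn}). Applying (\ref{eqphig}) directly yields
\[
\phi^t(\overline{s}, g(s)x) = (\overline{s+t}, g(s+t)x).
\]
Since $s + t = (s+\tau) + n$ with $n \in \mathbb{Z}$, we have $\overline{s+t} = \overline{s+\tau}$. For the second coordinate, the relation (\ref{eqgtgn}) with the parameter $s+\tau$ in place of $t$ gives $g(s+t) = g(s+\tau+n) = g(s+\tau) g^n$, and consequently $g(s+t)x = g(s+\tau)g^n x$, which is precisely (\ref{eqphisgsx}).

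No serious obstacle is expected; the only point requiring mild care is verifying that the integer $n$ picked in the second assertion lies in $\mathbb{N}$ and that the corresponding $\tau$ simultaneously meets both constraints $\tau \in (-1,1)$ and $s+\tau \in [0,1)$, which is why the argument is framed in terms of the single half-open interval $[-s, 1-s)$ of length one.
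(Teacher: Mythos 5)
Your proposal is correct and follows essentially the same route as the paper's proof: the same choice $x = g(s)^{-1}y$ for the first assertion, the same integer $n$ (your unique integer in $(t+s-1,\,t+s]$ is exactly the paper's greatest natural number with $n \leq s+t$), and the same combination of (\ref{eqphig}) and (\ref{eqgtgn}) for the last assertion. No discrepancies to report.
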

\begin{proof}
For each $r \in \mathbb{R}$, there exists a unique $s \in [0,1)$
such that $\overline{r} = \overline{s}$. Thus, for each $y \in F$,
putting $x = g(s)^{-1}y$, we have that $(\overline{r},y) =
(\overline{s},g(s)x)$, showing the first assertion. For the second
assertion, let $n$ the greatest natural number such that $n \leq s
+ t$. Since $n \leq s + t < (n+1)$, setting $\tau = t - n$, it
follows that $s + \tau \in [0,1)$. Thus $\tau \in (-1,1)$, since
$s \in [0, 1)$. For the last assertion, by equations
(\ref{eqphig}) and (\ref{eqgtgn}), it follows that
\[
\phi^t(\overline{s},g(s)x) = (\overline{s+\tau+n},g(s+\tau+n)x) =
(\overline{s+\tau}, g(s+\tau)g^n x).
\]
\end{proof}

We denote by $d$ the metric induced in $F$ by the norm $|\cdot|$
and consider the following metric in $S^1 \times F$ given by
\[
\overline{d}((\overline{s},x),(\overline{r},y)) =
\min\{|s-r|,1-|s-r|\} + d(x,y),
\]
where $s,r \in[0,1)$ and $x,y \in F$.

\begin{lema}\label{lemC}
There exists a constant $C \geq 1$ such that
\begin{equation}\label{eqlipg}
d(g(t)x, g(t)y) \leq Cd(x, y)
\end{equation}
for each $t \in [0,1)$ and every $x,y \in F$. We also have that
\begin{equation}\label{eqlipphi}
\overline{d}(\phi^t(\overline{s},x), \phi^t(\overline{r},y)) \leq
C\overline{d}((\overline{s},x), (\overline{r},y))
\end{equation}
for each $r,s,t \in [0,1)$ and every $x,y \in F$.
\end{lema}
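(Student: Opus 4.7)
The plan is to derive both inequalities from two common ingredients: (i) continuity of $t\mapsto g(t)$ on $\mathbb{R}$, which by compactness of $[0,1]$ gives uniform bounds on $|g(t)|$ and $|g(t)^{-1}|$; and (ii) linearity of each $g(t)$, which propagates norm differences via the operator norm.

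For inequality (\ref{eqlipg}), I would set $M=\sup_{t\in[0,1]}|g(t)|$, which is finite by continuity on a compact interval, and $C=\max\{1,M\}$. Linearity then gives $|g(t)x-g(t)y|=|g(t)(x-y)|\leq M\,|x-y|$, establishing (\ref{eqlipg}).

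For inequality (\ref{eqlipphi}), I would expand
\[
\phi^t(\overline{s},x)=(\overline{s+t},\,g(s+t)g(s)^{-1}x),\qquad \phi^t(\overline{r},y)=(\overline{r+t},\,g(r+t)g(r)^{-1}y),
\]
and decompose $\overline{d}$ into its base and fiber summands. The base summand is immediate: the rotation $\overline{u}\mapsto\overline{u+t}$ is an isometry of $(S^{1},d_{S^{1}})$ regardless of whether the representatives in $[0,1)$ wrap around, so $d_{S^{1}}(\overline{s+t},\overline{r+t})=d_{S^{1}}(\overline{s},\overline{r})$. For the fiber summand I would insert an intermediate vector $g(s+t)g(s)^{-1}y$ and apply the triangle inequality, splitting it into a linear piece bounded by $|g(s+t)g(s)^{-1}|\cdot d(x,y)$ and a cross-term carrying the operator-norm difference $|g(s+t)g(s)^{-1}-g(r+t)g(r)^{-1}|$.

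The main obstacle will be this cross-term. The operator-norm difference is jointly continuous in $(s,r,t)\in[0,1]^{3}$ and vanishes on the diagonal $s=r$, so by uniform continuity on the compact cube it is bounded by some $C'\cdot d_{S^{1}}(\overline{s},\overline{r})$; the wrap-around case $s+t\geq 1$ is reduced to a bounded time window via the identity $g(s+t)=g(s+t-1)g$ from (\ref{eqgtgn}), so that the same compactness argument also supplies a uniform bound on $|g(s+t)g(s)^{-1}|$ by $\sup_{u\in[0,2]}|g(u)|\cdot\sup_{v\in[0,1]}|g(v)^{-1}|$. The delicate point is organising the telescoping so that the vector factor multiplying the operator-norm difference is compatible with what the fiber metric $d(x,y)$ in $\overline{d}$ sees; here I would invoke Lemma~\ref{lemphisgsx} to parametrize points of $S^{1}\times F$ in the form $(\overline{s},g(s)x)$, where by (\ref{eqphig}) the fiber becomes the genuinely linear expression $g(s+t)x$, eliminating the awkward $s$-dependence of the operator before estimating. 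Enlarging $C$ a final time to absorb $C'$ and the operator-norm bound then yields (\ref{eqlipphi}).
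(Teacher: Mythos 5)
Your proof of the first inequality (\ref{eqlipg}) is correct and is precisely the paper's argument. The genuine gap is in the second inequality, and it sits exactly at what you call the main obstacle: the cross-term. After inserting the intermediate vector $g(s+t)g(s)^{-1}y$, that term is bounded by $|g(s+t)g(s)^{-1}-g(r+t)g(r)^{-1}|\,|y|$, and no final enlargement of $C$ can absorb it, because $|y|$ is unbounded over the Banach space $F$ while the right-hand side of (\ref{eqlipphi}) contains nothing that grows with $|y|$ once $x=y$. (Two further problems: uniform continuity of a function vanishing on the diagonal yields only a modulus of continuity, not a linear bound $C'|s-r|$ --- for that you would need the differentiability of $g$; and the reparametrization via Lemma \ref{lemphisgsx} does not help, since the operator acting on the fiber still depends on the base coordinate.) In fact no proof can close this gap, because inequality (\ref{eqlipphi}) is false as stated when $\overline{s}\neq\overline{r}$. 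Take $F=\mathbb{R}$ and $X(t)=\cos(2\pi t)$, so that $g(t)=\exp(\sin(2\pi t)/(2\pi))$, and take $s=0$, $r=1/2$, $t=1/4$, $x=y=N$. Then $g(s+t)g(s)^{-1}=e^{1/(2\pi)}$ and $g(r+t)g(r)^{-1}=e^{-1/(2\pi)}$, so the left-hand side of (\ref{eqlipphi}) equals $1/2+2N\sinh(1/(2\pi))$, which is unbounded in $N$, while the right-hand side is the constant $C/2$.

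What is true --- and is the only case the paper ever uses, since in the proof of Proposition \ref{proptransitivo} the two points $\eta_{i+1}$ and $\phi^{t_i}(\eta_i)$ always share the base coordinate $\overline{ir}$ --- is the same-base-point case $\overline{s}=\overline{r}$ of (\ref{eqlipphi}). There the base terms coincide, the cross-term never arises, and the fiber estimate reduces to
\[
d\bigl(g(s+t)g(s)^{-1}x,\,g(s+t)g(s)^{-1}y\bigr)\leq \bigl|g(s+t)g(s)^{-1}\bigr|\,d(x,y),
\]
where $|g(s+t)g(s)^{-1}|$ is bounded uniformly for $s,t\in[0,1)$ by your ingredient (i): compactness gives bounds on $|g(u)|$ for $u\in[0,2]$ and on $|g(s)^{-1}|$ for $s\in[0,1]$ (or use equation (\ref{eqgtgn}) to stay in $[0,1]$). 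For what it is worth, the paper's own justification --- that (\ref{eqlipphi}) follows ``immediately'' from (\ref{eqlipg}) and the definitions --- is also too quick, since a bound on $|g(s)^{-1}|$ is needed as well; but the statement the paper actually relies on is this restricted one, which your two ingredients do establish. So your argument can be salvaged by weakening the claim to $\overline{s}=\overline{r}$; the claim as literally stated cannot be proved.
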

\begin{proof}
Since $t \mapsto |g(t)|$ is a real continuous function, it is
bounded by a constant $C \geq 1$ in the compact interval $[0,1]$.
Hence we have that
\[
d(g(t)x, g(t)y) = |g(t)(x-y)| \leq |g(t)||x-y| \leq Cd(x, y).
\]
The inequality (\ref{eqlipphi}) follows immediately from the
inequality (\ref{eqlipg}) and the definitions of $\overline{d}$
and of $\phi^t$.
\end{proof}

Given a topological space $U$, we denote by $\mathcal{P}(U)$ the
set of all positive continuous functions whose domain is the whole
space $U$. We also need the following lemma.

\begin{lema}\label{lemc}
There exists $c \in \mathcal{P}(F)$ with $c \geq 1$ such that
\begin{equation}\label{eqcx}
d(x,y) \leq c(x)(|s-r|+d(g(s)x, g(r)y))
\end{equation}
for all $r,s \in [-2,2]$ and every $x,y \in F$.
\end{lema}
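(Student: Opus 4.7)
The plan is to obtain the inequality by decomposing $y-x$ in a way that inserts the quantity $g(s)x - g(r)y$, and then to use the continuity and (local) Lipschitz character of $t \mapsto g(t)$ on a compact interval containing $[-2,2]$.

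First I would record the two ingredients. On the compact interval $J = [-2,2]$, both maps $t \mapsto g(t)$ and $t \mapsto g(t)^{-1}$ are continuous (the second by the remark in the introduction that the inverse of $g(s)$ is a fundamental solution evaluated at $-s$), so
\[
K_1 := \sup_{t \in J} |g(t)^{-1}| < \infty.
\]
Moreover, since $g'(t) = X(t) g(t)$ with $X$ continuous and $g$ bounded on $J$, the mean value inequality applied to $g$ yields a constant $M$ with $|g(s) - g(r)| \leq M |s - r|$ for all $s,r \in J$.

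Next comes the algebraic trick. For any $r,s \in J$ and $x,y \in F$, insert $g(r)^{-1} g(s) x$ to obtain the identity
\[
y - x = g(r)^{-1}\bigl(g(r)y - g(s)x\bigr) + g(r)^{-1}\bigl(g(s) - g(r)\bigr)x.
\]
Taking norms and using the two estimates above gives
\[
d(x,y) = |y - x| \leq K_1\, d(g(s)x, g(r)y) + K_1 M\, |x|\, |s-r|.
\]
Factoring, one sees that the right-hand side is bounded by $\max\{K_1, K_1 M |x|\}\bigl(|s-r| + d(g(s)x, g(r)y)\bigr)$, so defining
\[
c(x) := 1 + K_1 + K_1 M\, |x|
\]
produces a continuous, everywhere positive function with $c \geq 1$ that satisfies the required inequality (\ref{eqcx}).

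I do not expect serious obstacles: the only point that requires a little care is ensuring the coefficient of $|x|$ genuinely comes out linearly (so that $c$ can be chosen continuous in $x$), and this is exactly what the decomposition above provides, since the $|x|$ factor arises only from the term $g(r)^{-1}(g(s)-g(r))x$. The rest is bookkeeping with the two uniform bounds on the compact interval $J$.
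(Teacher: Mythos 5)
Your proof is correct, and while it follows the same overall template as the paper's argument (an add-and-subtract decomposition, uniform bounds on the compact interval, and the mean value inequality), the key estimate is placed differently. The paper writes $x = g(s)^{-1}g(s)x$ and $y = g(r)^{-1}g(r)y$, inserts the intermediate point $g(r)^{-1}g(s)x$, and therefore needs the Lipschitz estimate $|g(s)^{-1} - g(r)^{-1}| \leq B|s-r|$; to obtain it, the paper invokes the differentiability of $t \mapsto g(t)^{-1}$ as the composition of $t \mapsto g(t)$ with the inversion map $h \mapsto h^{-1}$ on $L(F)$. Your identity $y - x = g(r)^{-1}\bigl(g(r)y - g(s)x\bigr) + g(r)^{-1}\bigl(g(s) - g(r)\bigr)x$ instead puts the Lipschitz burden on $g$ itself, where it comes essentially for free from the differential equation: $|g'(t)| = |X(t)g(t)|$ is bounded on $[-2,2]$ since $X$ and $g$ are continuous there, so the mean value inequality applies directly. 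This makes your route slightly more elementary, since you only need boundedness of $t \mapsto |g(t)^{-1}|$ on the compact interval rather than smoothness of the inversion map in a Banach algebra; your constants $K_1$ and $K_1 M |x|$ play exactly the roles of the paper's $D$ and $BD|x|$, and your explicit choice $c(x) = 1 + K_1 + K_1 M |x|$ serves the same purpose as the paper's $c(x) = \max\{BD|x|, D\}$.
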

\begin{proof}
We have that $t \mapsto g(t)^{-1}$ is a differentiable map, since
it is a composition of the differentiable maps $t \mapsto g(t)$
and $h \mapsto h^{-1}$. By the mean value inequality, there exists
$B \geq 1$ such that $|g(s)^{-1} - g(r)^{-1}| \leq B|s - r|$, for
all $r,s \in [-2,2]$. We also have that there exists $D \geq 1$
such that $|g(r)|, |g(r)^{-1}| \leq D$, for all $r \in [-2,2]$.
Defining $c(x) = \max\{BD|x|, D\}$, we have that $c \in
\mathcal{P}(F)$ with $c \geq 1$ and that
\begin{eqnarray*}
d(x,y) & = & |g(s)^{-1}g(s)x - g(r)^{-1}g(r)y| \\
& \leq & |g(s)^{-1}g(s)x - g(r)^{-1}g(s)x| + |g(r)^{-1}g(s)x - g(r)^{-1}g(r)y| \\
& \leq & |g(s)^{-1} - g(r)^{-1}||g(s)||x| + |g(r)^{-1}||g(s)x - g(r)y| \\
& \leq & BD|x||s - r| + D|g(s)x - g(r)y| \\
& \leq & c(x)(|s-r|+d(g(s)x, g(r)y),
\end{eqnarray*}
for all $r,s \in [-2,2]$ and every $x,y \in F$.
\end{proof}

We end this technical section with the following result.

\begin{lema}\label{lemmin}
Let $P : K \times U \to \mathbb{R}$ be a continuous function,
where $K$ is a compact space and $U$ is a topological space. Then
we have that $p :U \to \mathbb{R}$, given by
\[
p(x) = \min \{P(k,x) : k \in K\},
\]
is a continuous function.
\end{lema}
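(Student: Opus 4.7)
The plan is to verify continuity at an arbitrary point $x_0 \in U$ by establishing upper and lower semicontinuity separately, in each case producing, for a given $\varepsilon > 0$, an open neighborhood of $x_0$ on which $p$ behaves as required. Since $U$ is a general topological space, I would work entirely with neighborhoods rather than sequences.

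For the upper bound, I would use compactness of $K$ only to extract a minimizer: because $K$ is compact and $k \mapsto P(k,x_0)$ is continuous, there exists $k_0 \in K$ with $p(x_0) = P(k_0,x_0)$. Continuity of $P$ at $(k_0,x_0)$ then produces a neighborhood $V'$ of $x_0$ with $P(k_0,x) < p(x_0)+\varepsilon$ for all $x \in V'$, and since $p(x) \leq P(k_0,x)$ by definition, this gives $p(x) < p(x_0)+\varepsilon$ on $V'$.

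For the lower bound, the compactness of $K$ is used more essentially, via a finite subcover argument. For each $k \in K$ I would invoke continuity of $P$ at $(k,x_0)$ to get open neighborhoods $W_k \ni k$ in $K$ and $V_k \ni x_0$ in $U$ with $P(k',x) > P(k,x_0)-\varepsilon \geq p(x_0)-\varepsilon$ on $W_k \times V_k$. Covering $K$ by finitely many $W_{k_1},\ldots,W_{k_n}$ and setting $V = V_{k_1}\cap\cdots\cap V_{k_n}$, any $x \in V$ and any $k \in K$ lie in some $W_{k_i}\times V_{k_i}$, so $P(k,x) > p(x_0)-\varepsilon$. Taking the minimum over $k$ yields $p(x) \geq p(x_0)-\varepsilon$. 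Intersecting with $V'$ from the previous step completes the continuity estimate.

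The main obstacle, and the reason the statement is nontrivial, is the lower bound: here one must control $P(k,x)$ uniformly in $k$, which is exactly what the finite subcover supplies. The upper bound, by contrast, only requires the existence of a single minimizer. Once this pattern is recognized, no delicate calculation is needed; one simply has to be careful that all neighborhoods and quantifiers are arranged in the right order so that the estimate $|p(x)-p(x_0)| < \varepsilon$ holds simultaneously on $V \cap V'$.
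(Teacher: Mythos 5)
Your proof is correct, and its engine is the same as the paper's: compactness of $K$ enters through a finite subcover by neighborhoods on which $P$ is controlled, with the corresponding finitely many neighborhoods of $x_0$ in $U$ intersected to a single neighborhood $V$. The difference lies in how the estimate is packaged. The paper proves one two-sided uniform estimate: a single finite subcover yields $|P(l,x_0)-P(l,y)|<\varepsilon$ for \emph{all} $l\in K$ and all $y$ in a neighborhood of $x_0$, from which $|p(x_0)-p(y)|<\varepsilon$ follows immediately in both directions, and no minimizer is ever invoked. You instead split continuity into upper and lower semicontinuity: your lower bound is exactly the paper's subcover argument in one-sided form, while your upper bound uses compactness through a different and weaker mechanism, namely attainment of the minimum at some $k_0$ together with continuity of $P$ at the single point $(k_0,x_0)$. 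Your decomposition is slightly longer but has the virtue of isolating where compactness is truly essential: upper semicontinuity of an infimum of continuous functions holds with no hypothesis on $K$ at all (an infimum of continuous functions is always upper semicontinuous; you use compactness there only so that the infimum is attained), whereas lower semicontinuity is precisely the point where uniformity in $k$, hence the finite subcover, is indispensable. The paper's version is marginally more economical, since one subcover does both jobs at once.
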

\begin{proof}
Let $x \in U$. Since $P$ is continuous, given $\varepsilon > 0$,
for each $k \in K$, there exist an open neighborhood $A_k \subset
K$ of $k$ and an open neighborhood$B_k \subset U$ of $x$ such that
$|P(l,x) -P(l,y)| < \varepsilon$, for all $l \in A_k$ and all $y
\in B_k$. Since $\{A_k : k \in K\}$ is an open cover of the
compact set $K$, there exists a finite subcover
$\{A_{k_1},\ldots,A_{k_n}\}$. Defining $B = B_{k_1} \cap \cdots
\cap B_{k_n}$, it follows that $|P(l,x) - P(l,y)| < \varepsilon$,
for all $l \in K$ and all $y \in B$. Hence we have that $|p(x) -
p(y)| < \varepsilon$, for all $y \in B$, showing that $p$ is
continuous.
\end{proof}

\section{Asymptotic behavior}

We begin this section recalling the asymptotic concepts which we
deal with in this note (see \cite{ccp}, \cite{con} and
\cite{hur}). Let $\sigma^t$ be a continuous-time ($t \in
\mathbb{R}$) or a discrete-time ($t \in \mathbb{Z}$) flow on a
metric space $(U,d)$. The $\omega$-limit set of a given point $x
\in U$ is the set of points $y \in U$ such that there exists a
sequence $t_k \to \infty$ such that $\sigma^{t_k}(x) \to y$. A
point $x \in U$ is \emph{recurrent} if it belongs to its own
$\omega$-limit set. The set of all recurrent points is called
\emph{recurrent set of $\sigma^t$} and denoted by
$\mathcal{R}(\sigma^t)$. A subset $V$ of $U$ is
\emph{$\sigma^t$-invariant} if $\sigma^t(V) = V$ for each time
$t$. The \emph{stable set of $V$}, denoted by $\mbox{st}(V)$, is
the set of all points having its $\omega$-limit set contained in
$V$. Given $x, y \in U$, $\varepsilon \in \mathcal{P}(U)$ and
$t>0$, an $(\varepsilon,t)$-chain from $x$ to $y$ is given by a
set of times $t_i > t$ and a set of points $x_i \in U$, with $x_0
= x$, $x_n = y$ and such that $d(x_{i+1}, \sigma^{t_i}(x_i)) <
\varepsilon(\sigma^{t_i}(x_i))$, where $i=1,\ldots,k$. Two given
points $x,y \in U$ are \emph{chain equivalent} if, for all
$\varepsilon \in \mathcal{P}(U)$ and $t>0$, there exist an
$(\varepsilon,t)$-chain from $x$ to $y$ and also from $y$ to $x$.
A set $\mathcal{M}$ is \emph{chain transitive} if every two points
of it are chain equivalent. A point is \emph{chain recurrent} if
it is chain equivalent with itself. The set of all chain recurrent
points is called \emph{chain recurrent set of $\sigma^t$} and
denoted by $\mathcal{R}_c(\sigma^t)$. It is easy to verify that
the chain equivalence is an equivalence relation in the chain
recurrent set. Hence the chain recurrent set can be partitioned
into chain equivalence classes which are called the \emph{chain
transitive components}. It can be proved that the recurrent and
the chain recurrent sets are $\sigma^t$-invariant as well each
chain transitive component and each stable set of a given
$\sigma^t$-invariant set.

Let $E$ be a $g$-invariant subset of $F$. Using equation
(\ref{eqgtgn}), we have that
\[
S^1 \times_g E = \bigcup_{s \in \mathbb{R}} \{\overline{s}\}
\times g(s)E
\]
is a well defined $\phi^t$-invariant subset of $S^1 \times F$. We
use this construction to relate the asymptotic behavior of the
associated skew-product flow $\phi^t$ with the asymptotic behavior
of the discrete-time flow on $F$ generated by the invertible
operator $g$. We begin relating their recurrent sets.

\begin{proposicao}\label{propR}
We have that
\[
\mathcal{R}(\phi^t) = S^1 \times_g \mathcal{R}(g).
\]
\end{proposicao}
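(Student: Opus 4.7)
The plan is to establish the equivalence $x \in \mathcal{R}(g)$ if and only if $(\overline{s},g(s)x) \in \mathcal{R}(\phi^t)$ for every $s \in [0,1)$ and every $x \in F$. By Lemma \ref{lemphisgsx}, every point of $S^1 \times F$ admits the representation $(\overline{s},g(s)x)$, while equation (\ref{eqgtgn}) together with the $g$-invariance of $\mathcal{R}(g)$ identifies $S^1 \times_g \mathcal{R}(g)$ with $\bigcup_{s\in[0,1)} \{\overline{s}\} \times g(s)\mathcal{R}(g)$, so this equivalence is exactly what the proposition asserts.

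For the inclusion $\supseteq$ I would take $x \in \mathcal{R}(g)$ with $g^{n_k}x \to x$ and $n_k \to \infty$, apply (\ref{eqphig}) to obtain $\phi^{n_k}(\overline{s},g(s)x) = (\overline{s+n_k}, g(s+n_k)x)$, and then use (\ref{eqgtgn}) to rewrite the right-hand side as $(\overline{s}, g(s)g^{n_k}x)$. The Lipschitz estimate of Lemma \ref{lemC} applied to $g(s)$ immediately gives $g(s)g^{n_k}x \to g(s)x$, so $(\overline{s},g(s)x)$ is recurrent under $\phi^t$.

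For the inclusion $\subseteq$, suppose $\phi^{t_k}(\overline{s},g(s)x) \to (\overline{s},g(s)x)$ for some $t_k \to \infty$. Equation (\ref{eqphig}) rewrites this as $(\overline{s+t_k},g(s+t_k)x) \to (\overline{s},g(s)x)$. The key step, and the one I expect to need the most care, is to extract integer times at which $g$ recurs at $x$. I would choose $n_k$ to be the integer nearest $t_k$ and set $\tau_k = t_k - n_k$, so that $|\tau_k| \leq 1/2$; the circle-component convergence then coincides with $|\tau_k| \to 0$, and $t_k \to \infty$ forces $n_k \to \infty$, so $n_k \in \mathbb{N}$ eventually (which is what (\ref{eqgtgn}) requires). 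Factoring $g(s+t_k) = g(s+\tau_k) g^{n_k}$ via (\ref{eqgtgn}), the fiber convergence reads $g(s+\tau_k)g^{n_k}x \to g(s)x$. Continuity of $t \mapsto g(t)^{-1}$, already invoked in the proof of Lemma \ref{lemc}, yields $g(s+\tau_k)^{-1} \to g(s)^{-1}$, and premultiplying gives $g^{n_k}x \to x$, placing $x$ in $\mathcal{R}(g)$ and finishing the proof. The only subtle points are the nearest-integer choice that guarantees $\tau_k \to 0$ and the verification that $n_k$ eventually enters $\mathbb{N}$; everything else is a direct combination of the identities (\ref{eqgtgn}), (\ref{eqphig}) and the continuity properties already collected in the previous section.
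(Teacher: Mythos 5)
Your proof is correct, and its overall skeleton coincides with the paper's: both inclusions are proved the same way, with the nontrivial direction resting on splitting each time $t_k$ into an integer part $n_k$ and a fractional part $\tau_k$, and then recovering $g^{n_k}x \to x$ by applying the inverses $g(s+\tau_k)^{-1}$ and using continuity of $t \mapsto g(t)^{-1}$. The genuine difference is the choice of decomposition. The paper invokes Lemma \ref{lemphisgsx}, which takes $n_k$ to be the greatest natural number with $n_k \leq s + t_k$, so that the representative $s+\tau_k$ lies in $[0,1)$; it then deduces $s+\tau_k \to s$ from $\overline{s+\tau_k} \to \overline{s}$ together with $s, s+\tau_k \in [0,1)$. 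You instead take $n_k$ to be the nearest integer to $t_k$, so that $|\tau_k| \leq 1/2$, which makes convergence on the circle literally equivalent to $\tau_k \to 0$. Your normalization is actually slightly more robust: with the paper's choice, when $s = 0$ the representatives $s+\tau_k$ can tend to $1$ while $\overline{s+\tau_k} \to \overline{0}$, so the paper's step ``$s+\tau_k \to s$'' needs an extra word in that edge case (either ruling it out or observing that $g^{n_k}x \to g^{-1}x$ still gives recurrence via $g^{n_k+1}x \to x$); your nearest-integer choice makes the issue disappear uniformly in $s$, at the small and correctly handled cost of noting that $n_k \in \mathbb{N}$ only for $k$ large. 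Aside from this, the two arguments use the same ingredients --- boundedness of $g(s)$ for the easy inclusion, equation (\ref{eqgtgn}) for the factorization, and continuity of the inverse for the converse --- so yours can be read as a self-contained variant that bypasses Lemma \ref{lemphisgsx} and tightens its one delicate step.
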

\begin{proof}
Let $(\overline{s}, g(s)x) \in S^1 \times F$, where $s \in [0,1)$
and $x \in \mathcal{R}(g)$. By definition of $\mathcal{R}(g)$,
there exists $n_k \to \infty$ such that $g^{n_k}x \to x$. Using
equation (\ref{eqphisgsx}), we have that
\[
\phi^{n_k}(\overline{s}, g(s)x) = (\overline{s}, g(s)g^{n_k}x) \to
(\overline{s}, g(s)x),
\]
showing that $S^1 \times_g \mathcal{R}(g) \subset
\mathcal{R}(\phi^t)$. On the other hand, let $(\overline{s},
g(s)x) \in \mathcal{R}(\phi^t)$, where $s \in [0,1)$. By
definition of $\mathcal{R}(\phi^t)$, there exists $t_k \to \infty$
such that
\[
\phi^{t_k}(\overline{s}, g(s)x) \to (\overline{s}, g(s)x).
\]
By Lemma \ref{lemphisgsx}, there exist $\tau_k \in (-1,1)$ and
$n_k \in \mathbb{N}$ such that
\[
t_k = \tau_k + n_k \qquad \mbox{and} \qquad s+\tau_k \in [0,1).
\]
Thus we have that $n_k \to \infty$, since $t_k \to \infty$. Using
equation (\ref{eqphisgsx}), we have that
\[
\phi^{t_k}(\overline{s}, g(s)x) = (\overline{s+\tau_k},
g(s+\tau_k)g^{n_k} x) \to (\overline{s}, g(s)x).
\]
Hence $\overline{s+\tau_k} \to \overline{s}$ and, since $s$ and
$s+\tau_k \in [0,1)$, we have that $s+\tau_k \to s$. Therefore
\[
g^{n_k} x = g(s+\tau_k)^{-1}g(s+\tau_k)g^{n_k} x \to
g(s)^{-1}g(s)x = x,
\]
showing that $x \in \mathcal{R}(g)$ and that $\mathcal{R}(\phi^t)
\subset S^1 \times_g \mathcal{R}(g)$.
\end{proof}

Now we consider the invariant and chain transitive sets.

\begin{proposicao}\label{proptransitivo}
If $\mathcal{M}$ is invariant and chain transitive for $g$, then
$S^1 \times_g \mathcal{M}$ is chain transitive for $\phi^t$.
\end{proposicao}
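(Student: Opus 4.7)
My plan is to promote the chain transitivity of $\mathcal{M}$ under the discrete flow $g$ to chain transitivity of $S^1 \times_g \mathcal{M}$ under the continuous-time flow $\phi^t$, by lifting chains fiberwise. The key observation is that two arbitrary points of $S^1 \times_g \mathcal{M}$ can be written as $(\overline{s_1}, g(s_1)x)$ and $(\overline{s_2}, g(s_2)y)$ with $s_1, s_2 \in [0,1)$ and $x, y \in \mathcal{M}$, and a $g$-chain in $\mathcal{M}$ from $x$ to $y$ can be promoted along the fiber provided we can also arrange for the base coordinate to move from $\overline{s_1}$ to $\overline{s_2}$ somewhere along the lifted chain.

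Given $\varepsilon \in \mathcal{P}(S^1 \times F)$ and $t>0$, let $C$ be the constant from Lemma~\ref{lemC} and set
\[
\varepsilon'(w) = \frac{1}{C}\min\{\varepsilon(\overline{s_1}, g(s_1)w),\ \varepsilon(\overline{s_2}, g(s_2)w)\}.
\]
Continuity of $\varepsilon$ and of $g(s_1), g(s_2)$ guarantees $\varepsilon' \in \mathcal{P}(F)$. Fix an integer $N > t+1$ and invoke chain transitivity of $\mathcal{M}$ under $g$ to obtain an $(\varepsilon',N)$-chain $x = z_0, z_1, \ldots, z_n = y$ with iteration exponents $m_0, \ldots, m_{n-1} \geq N$. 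Lift this to $S^1 \times F$ by setting $\tilde{z}_i = (\overline{s_1}, g(s_1)z_i)$ for $i = 0,\ldots,n-1$ and $\tilde{z}_n = (\overline{s_2}, g(s_2)y)$. Between $\tilde{z}_i$ and $\tilde{z}_{i+1}$ for $i < n-1$, use the integer flow time $m_i$: by equation~(\ref{eqphisgsx}) applied with $\tau = 0$ this sends $\tilde{z}_i$ to $(\overline{s_1}, g(s_1)g^{m_i}z_i)$, and Lemma~\ref{lemC} together with the definition of $\varepsilon'$ bounds the jump to $\tilde{z}_{i+1}$ by $C\,d(g^{m_i}z_i, z_{i+1}) < \varepsilon(\phi^{m_i}(\tilde{z}_i))$.

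The last step carries the real difficulty: flow $\tilde{z}_{n-1}$ for the non-integer time $m_{n-1} + s_2 - s_1$, which exceeds $t$ since $m_{n-1} \geq N > t+1$ and $s_2 - s_1 > -1$. By equation~(\ref{eqphig}) together with the cocycle identity~(\ref{eqgtgn}), this flow lands at $(\overline{s_2}, g(s_2)g^{m_{n-1}}z_{n-1})$, from which Lemma~\ref{lemC} applied at parameter $s_2$ bounds the jump to $\tilde{z}_n = (\overline{s_2}, g(s_2)y)$ by $C\,d(g^{m_{n-1}}z_{n-1}, y) < \varepsilon(\phi^{m_{n-1}+s_2-s_1}(\tilde{z}_{n-1}))$. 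The main obstacle is precisely this terminal step: the base coordinate must be adjusted by a single non-integer time increment, and the same underlying chain in $\mathcal{M}$ must control the error both at the intermediate fibers sitting above $\overline{s_1}$ and at the endpoint sitting above $\overline{s_2}$. Taking the minimum over both base coordinates in the definition of $\varepsilon'$ is what makes one chain in $\mathcal{M}$ work for both roles; chain equivalence in the reverse direction follows by symmetry.
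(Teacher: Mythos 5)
Your proof is correct, and it takes a genuinely different---and leaner---route than the paper's. Both arguments share the same core mechanism: lift an $(\varepsilon',\cdot)$-chain for $g$ in $\mathcal{M}$ to a chain for $\phi^t$, using the bound $C$ of Lemma \ref{lemC} to convert fiber errors. The difference is in how the base coordinate is moved. The paper spreads the base increment evenly over all $k$ jumps of the $g$-chain (lifted times $n_i + s/k$), so its tolerance $\delta(z) = C^{-1}\min\{\varepsilon(\overline{r},g(r)z) : r \in [0,1]\}$ involves a minimum over a whole compact interval (hence Lemma \ref{lemmin} is invoked), and it then needs a second stage---pushing an entire chain forward by $\phi^u$ and invoking the Lipschitz estimate (\ref{eqlipphi})---to handle an arbitrary initial base point $\overline{u}$. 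You instead freeze the base at $\overline{s_1}$ for every jump except the last, and make the entire base adjustment $s_2 - s_1$ in one final non-integer flow time $m_{n-1}+s_2-s_1$, which stays above $t$ because you chose $N > t+1$ and $s_2 - s_1 > -1$; accordingly your tolerance $\varepsilon'$ only needs the minimum of $\varepsilon$ over the two relevant base points $s_1, s_2$, so neither Lemma \ref{lemmin} nor inequality (\ref{eqlipphi}) is required, only (\ref{eqlipg}), and arbitrary endpoint pairs are handled in a single stage rather than two. Your error bookkeeping is exactly right: the intermediate jumps land at $(\overline{s_1}, g(s_1)g^{m_i}z_i)$ and are controlled by the $s_1$-branch of the minimum, while the terminal jump lands at $(\overline{s_2}, g(s_2)g^{m_{n-1}}z_{n-1})$ (by (\ref{eqphig}) and (\ref{eqgtgn})) and is controlled by the $s_2$-branch. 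Two small points worth making explicit in a final write-up: (i) representing every point of $S^1 \times_g \mathcal{M}$ as $(\overline{s},g(s)x)$ with $s\in[0,1)$ and $x\in\mathcal{M}$ uses the $g$-invariance of $\mathcal{M}$ (via $g(s+n)=g(s)g^n$ and $g^n\mathcal{M}=\mathcal{M}$); (ii) inequality (\ref{eqlipg}) applies to both kinds of jumps precisely because $s_1, s_2 \in [0,1)$. What the paper's two-stage version buys in exchange for its extra machinery is mainly modularity (a reusable statement about chains based at $\overline{0}$, then transport by the flow); your version exploits the explicit section over $[0,1)$ more aggressively and is shorter.
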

\begin{proof}
First we prove that, for all $\varepsilon \in \mathcal{P}(S^1
\times F)$, $t>0$, $s \in [0,1)$ and for every $x,y \in
\mathcal{M}$, there exists an $(\varepsilon,t)$-chain from
$(\overline{0},x)$ to $(\overline{s},g(s)y)$. By Lemma
\ref{lemmin}, defining
\[
\delta(z) = C^{-1} \min \{ \varepsilon(\overline{r},g(r)z) : \, r
\in [0,1]\},
\]
we have that $\delta \in \mathcal{P}(F)$. Since $\mathcal{M}$ is
chain transitive, there exists a $(\delta,t)$-chain from $x$ to
$y$ given by $n_i > t$ and $x_i \in F$ such that
$d(x_{i+1},g^{n_i}x_i)<\delta(g^{n_i}x_i)$, where $i=1,\ldots,k$.
Putting $r = s/k < 1$ and defining
\[
t_i = n_i + r > t, \qquad \eta_i = (\overline{0},x)
\qquad\mbox{and}\qquad \eta_{i+1} = (\overline{ir},g(ir)x_{i+1}),
\]
where $i=1,\ldots,k$, we claim that this provides an
$(\varepsilon,t)$-chain from $(\overline{0},x)$ to
$(\overline{s},g(s)y)$. In fact, since $\phi^{t_i}(\eta_i) =
(\overline{ir},g(ir)g^{n_i}x_i)$ and $ir \in [0,1]$, using
inequality (\ref{eqlipg}), we have that
\[
\overline{d}(\eta_{i+1},\phi^{t_i}(\eta_i)) = d(g(ir)x_{i+1},
g(ir)g^{n_i}x_i) \leq Cd(x_{i+1}, g^{n_i}x_i) < C\delta(
g^{n_i}x_i)
\]
and hence
\[
\overline{d}(\eta_{i+1},\phi^{t_i}(\eta_i)) <
\varepsilon(\overline{ir},g(ir)g^{n_i}x_i) =
\varepsilon(\phi^{t_i}(\eta_i)).
\]
We also have that $\eta_{k+1} = (\overline{s},g(s)y)$, since $kr =
s$ and $x_{k+1} = y$.

Now we prove that, for all $\varepsilon \in \mathcal{P}(S^1 \times
F)$, $t>0$, for each $u,v \in [0,1)$ and for every $x,y \in
\mathcal{M}$, there exists an $(\varepsilon,t)$-chain from
$(\overline{u},g(u)x)$ to $(\overline{v},g(v)y)$. We denote
$\delta = C^{-1}\varepsilon \circ \phi^u \in \mathcal{P}(S^1
\times F)$. If $v \geq u$, we have that $v - u \in [0,1)$. Using
the first part of the proof, there exists a $(\delta,t)$-chain
from $(\overline{0},x)$ to $(\overline{v-u},g(v-u)y)$, denoted by
$t_i
> t$ and $\eta_i \in S^1 \times F$, with
$\overline{d}(\eta_{i+1}, \phi^{t_i}(\eta_i)) <
\delta(\phi^{t_i}(\eta_i))$, where $i=1,\ldots,k$. Taking the same
times $t_i > t$ and putting $\xi_i = \phi^u(\eta_i)$, we get an
$(\varepsilon,t)$-chain from $(\overline{u},g(u)x)$ to
$(\overline{v},g(v)y)$. In fact, using inequality
(\ref{eqlipphi}), we have that
\[
\overline{d}(\xi_{i+1}, \phi^{t_i}(\xi_i)) =
\overline{d}(\phi^u(\eta_{i+1}), \phi^u(\phi^{t_i}(\eta_i))) \leq
C\overline{d}(\eta_{i+1}, \phi^{t_i}(\eta_i)) <
\varepsilon(\phi^{t_i}(\xi_i))
\]
and that
\[
\xi_1 = \phi^u(\overline{0},x) = (\overline{u},g(u)x)
\quad\mbox{and}\quad \xi_{k+1} = \phi^u(\overline{v-u},g(v-u)y) =
(\overline{v},g(v)y).
\]
If $v < u$, we have that $1 + v - u \in [0,1)$. Using again the
first part of the proof, there exists a $(\delta,t)$-chain from
$(\overline{0},x)$ to $(\overline{1+v-u},g(1+v-u)g^{-1}y)$,
denoted by $t_i > t$ and $\eta_i \in S^1 \times F$, where
$i=1,\ldots,k$. Arguing exactly as before, we obtain an
$(\varepsilon,t)$-chain from $(\overline{u},g(u)x)$ to
$(\overline{v},g(v)y)$, if we take the same times $t_i > t$ and
put $\xi_i = \phi^u(\eta_i)$, since
\[
\phi^u(\overline{1+v-u},g(1+v-u)g^{-1}y) =
(\overline{1+v},g(1+v)g^{-1}y) = (\overline{v},g(v)y).
\]
\end{proof}

In the following, we provide the relation between the chain
recurrent sets.

\begin{teorema}\label{teoRc}
We have that
\[
\mathcal{R}_c(\phi^t) = S^1 \times_g \mathcal{R}_c(g).
\]
\end{teorema}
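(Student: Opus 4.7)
I will prove the two inclusions separately. The forward one, $S^1 \times_g \mathcal{R}_c(g) \subseteq \mathcal{R}_c(\phi^t)$, is immediate from Proposition \ref{proptransitivo}: any $x \in \mathcal{R}_c(g)$ lies in a chain transitive component $\mathcal{M}$ of $g$, which is $g$-invariant, so by Proposition \ref{proptransitivo} the set $S^1 \times_g \mathcal{M}$ is chain transitive for $\phi^t$, and in particular $(\overline{s}, g(s)x)$ is chain recurrent for $\phi^t$.

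For the reverse inclusion, fix $(\overline{s}, g(s)x) \in \mathcal{R}_c(\phi^t)$ (with $s \in [0,1)$), $\delta \in \mathcal{P}(F)$ and $t > 0$; I will build $\varepsilon \in \mathcal{P}(S^1 \times F)$ together with $t^* := t+2$ so that any $(\varepsilon, t^*)$-chain for $\phi^t$ from $(\overline{s},g(s)x)$ to itself descends to a $(\delta, t)$-chain for $g$ from $x$ to $x$. Write such a $\phi^t$-chain as $\eta_i = (\overline{r_i}, g(r_i) y_i)$ with $r_i \in [0,1)$ (so $y_0 = y_k = x$). Lemma \ref{lemphisgsx} supplies $\tau_i \in (-1,1)$ and $n_i \in \mathbb{N}$ with $t_i = \tau_i + n_i$, $r_i + \tau_i \in [0,1)$ and $\phi^{t_i}(\eta_i) = (\overline{r_i + \tau_i}, g(r_i + \tau_i) g^{n_i} y_i)$; since $t_i > t+2$ and $|\tau_i| < 1$, one has $n_i > t+1$. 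The candidate $g$-chain reuses the same $y_i$ with integer times $\tilde n_i := n_i - m_i$, where $m_i \in \{-1, 0, 1\}$ is chosen so that $\tilde\tau_i := \tau_i + m_i$ makes $|r_{i+1} - (r_i + \tilde\tau_i)|$ equal to the circle distance between $r_{i+1}$ and $r_i + \tau_i$. By equation (\ref{eqgtgn}), $g(r_i + \tilde\tau_i) g^{\tilde n_i} = g(r_i + \tau_i) g^{n_i}$, so the chain inequality unpacks, via Lemma \ref{lemc} (applicable since $r_{i+1}, r_i + \tilde\tau_i \in [-2,2]$), into
\[
d(y_{i+1}, g^{\tilde n_i} y_i) \leq c(y_{i+1}) \, \overline{d}(\eta_{i+1}, \phi^{t_i}(\eta_i)) < c(y_{i+1}) \, \varepsilon(\phi^{t_i}(\eta_i)),
\]
and $\tilde n_i \geq n_i - 1 > t$.

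It remains to calibrate $\varepsilon$ so that the right-hand side is strictly less than $\delta(g^{\tilde n_i} y_i)$. I would set $\delta^*(u) := \min\{\delta(g^{-1}u), \delta(u), \delta(gu)\} \in \mathcal{P}(F)$, which satisfies $\delta^*(g^{n_i} y_i) \leq \delta(g^{\tilde n_i} y_i)$ since $g^{n_i} y_i = g^{m_i}(g^{\tilde n_i} y_i)$ with $|m_i| \leq 1$, and then define
\[
\varepsilon(\overline{p}, w) := \min_{p' \in [0,1]} \frac{\delta^*(g(p')^{-1} w)}{G(g(p')^{-1} w)},
\]
where $G \in \mathcal{P}(F)$ is a continuous positive function chosen large enough that $c(y_{i+1}) < G(g^{n_i} y_i)$ for every admissible chain point $y_{i+1}$ attached to $u = g(p')^{-1} w$; since the chain condition together with the uniform bounds $|g(r)|, |g(r)^{-1}| \leq D$ (from the proof of Lemma \ref{lemc}) control $\|y_{i+1}\|$ in terms of $\|u\|$, and $c$ is continuous, such a $G$ is available. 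By Lemma \ref{lemmin}, $\varepsilon$ is continuous on $S^1 \times F$ (its dependence on $\overline{p}$ being vacuous), hence lies in $\mathcal{P}(S^1 \times F)$. Evaluating the minimum at $p' = r_i + \tau_i \in [0,1)$ then yields $c(y_{i+1}) \varepsilon(\phi^{t_i}(\eta_i)) < \delta(g^{\tilde n_i} y_i)$, producing the desired $g$-chain. The main obstacle is exactly this last calibration: the natural target $\delta(g^{\tilde n_i} y_i)$ is evaluated at a shifted iterate of the ``canonical'' point $g^{n_i} y_i$ extracted from $\phi^{t_i}(\eta_i)$, and the Lipschitz factor $c(y_{i+1})$ depends on a chain point not known in advance; the symmetrization $\delta \leadsto \delta^*$ handles the index shift, while the minimum construction (continuous by Lemma \ref{lemmin}) restores the continuity of $\varepsilon$ despite the fundamental-domain parametrization $[0,1) \to S^1$ being discontinuous.
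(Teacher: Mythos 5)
Your overall strategy is, in its skeleton, the same as the paper's: the forward inclusion via Proposition \ref{proptransitivo}, and for the reverse inclusion a descent of a skew-product chain to a $g$-chain obtained by splitting each chain time into an integer plus a fractional part, shifting the integer by an element of $\{-1,0,1\}$ to match the circle metric (the paper's three cases $(1)$--$(3)$), and controlling the fiber distance with Lemma \ref{lemc} and a test function built with Lemma \ref{lemmin}. The one genuine structural difference is that you work with real-time chains for $\phi^t$ directly, using Lemma \ref{lemphisgsx} to extract $n_i$ and $\tau_i$, whereas the paper first reduces to the time-one map $\phi^1$ by citing Theorem 3 of \cite{ccp}; your route is more self-contained on this point and is perfectly fine.

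However, the calibration step --- which you yourself flag as the main obstacle --- has a genuine gap. First, the definition of $G$ is circular: an ``admissible chain point $y_{i+1}$'' is one satisfying $\overline{d}(\eta_{i+1},\phi^{t_i}(\eta_i)) < \varepsilon(\phi^{t_i}(\eta_i))$, but $\varepsilon$ is defined in terms of $G$, so the set of points that $G$ must dominate depends on $G$ itself. (This is repairable: demand $G \geq 1$, so that $\varepsilon \leq \delta^*$ and the chain condition bounds $|y_{i+1}|$ by $D(D|u| + \delta^*(u))$ independently of $G$.) Second, and more seriously, the justification ``$c$ is continuous, such a $G$ is available'' is invalid in an infinite-dimensional Banach space: a continuous positive function need not be bounded on bounded sets, so a norm bound on $y_{i+1}$ does not by itself yield a finite bound on $c(y_{i+1})$; you must invoke the explicit radial form $c(x) = \max\{BD|x|, D\}$ from the proof of Lemma \ref{lemc}, not merely its continuity. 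Both defects vanish if you exchange the arguments in Lemma \ref{lemc} (the metric is symmetric): taking $x = g^{\tilde{n}_i}y_i$ and $y = y_{i+1}$ produces the factor $c(g^{\tilde{n}_i}y_i)$, evaluated at a point determined by the chain data at step $i$, namely $g^{\tilde{n}_i}y_i = g^{-m_i}g(r_i+\tau_i)^{-1}w$ where $w$ is the fiber coordinate of $\phi^{t_i}(\eta_i)$. Then $c^{-1}$ can be absorbed into the test function,
\[
\varepsilon(\overline{p},w) = \min\{\,c(g^{j} g(p')^{-1}w)^{-1}\,\delta(g^{j} g(p')^{-1}w) \,:\, p' \in [0,1],\ j \in \{-1,0,1\}\,\},
\]
which is continuous by Lemma \ref{lemmin} and renders both $G$ and $\delta^*$ unnecessary; this is exactly the device in the paper's own proof (its function $\delta(\overline{r},y) = 2^{-1}\min\{c(g(t)^{-1}y)^{-1}\varepsilon(g(t)^{-1}y) : t \in [0,1]\}$ plays this role, with the names of $\varepsilon$ and $\delta$ interchanged relative to yours).
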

\begin{proof}
By Theorem 3 of \cite{ccp}, we have that $\mathcal{R}_c(\phi^t) =
\mathcal{R}_c(\phi^1)$. Hence we just need to prove that
$\mathcal{R}_c(\phi^1) = S^1 \times_g \mathcal{R}_c(g)$. Since
every chain recurrent set is partitioned in its chain transitive
components, using Proposition \ref{proptransitivo}, it follows
that $S^1 \times_g \mathcal{R}_c(g) \subset
\mathcal{R}_c(\phi^1)$. Let $(\overline{s},g(s)x) \in
\mathcal{R}_c(\phi^1)$, where $s \in [0,1)$ and $x \in F$. We
claim that $x \in \mathcal{R}_c(g)$. Let $\varepsilon \in
\mathcal{P}(F)$ and $n \in \mathbb{N}$. By Lemma \ref{lemmin},
defining
\[
\delta(\overline{r},y) = 2^{-1} \min \{
c(g(t)^{-1}y)^{-1}\varepsilon(g(t)^{-1}y) : \, t \in [0,1]\},
\]
it follows that $\delta \in \mathcal{P}(S^1 \times F)$. Hence
there exists an $(\delta,n)$-chain from $(\overline{s},g(s)x)$ to
itself, denoted by $m_i > 2n$ and $\xi_i =
(\overline{s_i},g(s_i)x_i) \in S^1 \times F$, where $s_i \in
[0,1)$ and $\overline{d}(\xi_{i+1}, \phi^{m_i}(\xi_i)) <
\delta(\phi^{m_i}(\xi_i))$, where $i=1,\ldots,k$. Hence we have
that
\[
\min\{|s_{i+1}-s_i|,1-|s_{i+1}-s_i|\} < \delta(\phi^{m_i}(\xi_i)).
\]
Thus we get the following three possibilities and the
corresponding choices
\[
\begin{array}{lll}
(1)   & |s_{i+1}-s_i| <  \delta(\phi^{m_i}(\xi_i)),       & n_i = m_i \\
(2)  & (1 + s_i) - s_{i+1} <  \delta(\phi^{m_i}(\xi_i)), & n_i = m_i-1 \\
(3) & s_{i+1} - (s_i-1) <  \delta(\phi^{m_i}(\xi_i)),   & n_i = m_i+1 \\
\end{array}
\]
In each possibility, we have that $n_i > n$ and, using inequality
(\ref{eqcx}), we get that
\[
d(x_{i+1},g^{n_i}x_i) \leq 2c(g^{n_i}x_i)\delta(\phi^{m_i}(\xi_i))
\leq \varepsilon(g^{n_i}x_i),
\]
showing that $x \in \mathcal{R}_c(g)$, since $x_0 = x = x_{k+1}$.
\end{proof}

\begin{corolario}\label{corM}
The map $E \mapsto S^1 \times_g E$, where $E \subset F$ is
$g$-invariant, is a bijection between chain transitive components
of $g$ and of $\phi^t$
\end{corolario}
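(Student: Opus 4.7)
The strategy is to parlay Theorem \ref{teoRc}, which matches chain recurrent sets globally, into a bijection at the component level, using Proposition \ref{proptransitivo} on one side and an adaptation of the chain-extraction argument from the proof of Theorem \ref{teoRc} on the other.

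I would begin with the easy parts. If $E$ is a $g$-chain transitive component, Proposition \ref{proptransitivo} says $S^1 \times_g E$ is $\phi^t$-chain transitive, and Theorem \ref{teoRc} places it inside $\mathcal{R}_c(\phi^t)$; hence $S^1 \times_g E$ is contained in a unique $\phi^t$-chain transitive component. Injectivity is immediate as well, since $g(s)$ is a bijection of $F$ for each $s$, so distinct (hence disjoint) components $E_1 \neq E_2$ yield disjoint skew products $S^1 \times_g E_1$ and $S^1 \times_g E_2$.

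The main work is surjectivity, together with the identification $\mathcal{N} = S^1 \times_g E$ of the corresponding component. Given a $\phi^t$-chain transitive component $\mathcal{N}$, Theorem \ref{teoRc} lets me set
\[
E := \{ y \in F : \, (\overline{s}, g(s)y) \in \mathcal{N} \text{ for some } s \in [0,1) \} \subset \mathcal{R}_c(g),
\]
and the crux is to prove that any two $y_1, y_2 \in E$ are $g$-chain equivalent. Their lifts $(\overline{s_i}, g(s_i)y_i) \in \mathcal{N}$ are $\phi^1$-chain equivalent (via Theorem 3 of \cite{ccp}), and I would mimic the second half of the proof of Theorem \ref{teoRc}: given $\varepsilon \in \mathcal{P}(F)$ and $n \in \mathbb{N}$, form
\[
\delta(\overline{r}, z) = 2^{-1} \min \{ c(g(t)^{-1}z)^{-1} \varepsilon(g(t)^{-1}z) : \, t \in [0,1] \},
\]
pick a $(\delta, n)$-chain from $(\overline{s_1}, g(s_1)y_1)$ to $(\overline{s_2}, g(s_2)y_2)$ under $\phi^1$, and use the same three-case split on $|s_{i+1}-s_i|$ together with Lemma \ref{lemc} to produce an $(\varepsilon, n)$-chain from $y_1$ to $y_2$ under $g$. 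This is a near-verbatim reprise of that proof with $x_0 \neq x_{k+1}$, and is the main technical obstacle; no new ingredient is needed, but this is where the delicate estimates live.

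From here the remainder is formal. The set $E$ lies inside some $g$-chain transitive component $\mathcal{M}$; conversely, for any $z \in \mathcal{M}$ and any $y \in E$, Proposition \ref{proptransitivo} places both $(\overline{s}, g(s)y) \in \mathcal{N}$ and $(\overline{0}, z)$ in the $\phi^t$-chain transitive set $S^1 \times_g \mathcal{M}$, so $(\overline{0}, z) \in \mathcal{N}$ and $z \in E$, giving $E = \mathcal{M}$. The same reasoning yields $\mathcal{N} = S^1 \times_g E$, and surjectivity follows.
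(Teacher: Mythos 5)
Your proof is correct and shares the paper's skeleton (Proposition \ref{proptransitivo} for one inclusion, Theorem \ref{teoRc} for the global matching), but you do genuinely more work than the paper's own proof, and that extra work addresses a real point. The paper's proof is two sentences: each $S^1 \times_g \mathcal{M}$ is chain transitive, hence contained in some $\phi^t$-component, and then ``the result follows'' from $\mathcal{R}_c(\phi^t) = S^1 \times_g \mathcal{R}_c(g)$ together with the partition of chain recurrent sets into components. Read literally, those facts do not exclude the possibility that two distinct $g$-components $\mathcal{M}_1 \neq \mathcal{M}_2$ yield sets $S^1 \times_g \mathcal{M}_1$ and $S^1 \times_g \mathcal{M}_2$ lying inside one and the same $\phi^t$-component: a disjoint union of two invariant chain transitive sets can perfectly well be chain transitive, so the partition argument alone cannot separate them. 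What rules this out is exactly what you call the crux: descent of chain equivalence, i.e.\ that $\phi^1$-chain equivalence of $(\overline{s_1},g(s_1)y_1)$ and $(\overline{s_2},g(s_2)y_2)$ forces $g$-chain equivalence of $y_1$ and $y_2$, obtained by re-running the second half of the proof of Theorem \ref{teoRc} on a chain whose endpoints differ --- legitimate, since those estimates never use $x_0 = x_{k+1}$. So your version is self-contained where the paper's implicitly relies on the proof, rather than the statement, of Theorem \ref{teoRc}; the price is length. One caveat, which applies equally to the paper: the passage between $\phi^t$ and $\phi^1$ via Theorem 3 of \cite{ccp} must be used at the level of chain equivalence (so that components, not merely chain recurrent sets, correspond), and your argument should say explicitly that it invokes the citation in that stronger form.
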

\begin{proof}
By Proposition \ref{proptransitivo}, if $\mathcal{M}$ is a chain
transitive component of $g$, then $S^1 \times_g \mathcal{M}$ is
chain transitive for $\phi^t$ and hence is contained in a chain
transitive component of $\phi^t$. The result follows, since
$\mathcal{R}_c(\phi^t) = S^1 \times_g \mathcal{R}_c(g)$ and since
every chain recurrent set is partitioned in its chain transitive
components.
\end{proof}

We end this note relating the stable sets of chain transitive
components.

\begin{proposicao}\label{propestavel}
If $\mathcal{M}$ is a chain transitive component of $g$, then
\[
\emph{st}(S^1 \times_g \mathcal{M}) = S^1 \times_g
\emph{st}(\mathcal{M}).
\]
\end{proposicao}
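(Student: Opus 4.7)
The plan is to prove the two inclusions separately, using Lemma \ref{lemphisgsx} to translate between $\phi^t$-orbits and $g$-orbits as we did in the proof of Proposition \ref{propR}, plus the observation that $s \in [0,1)$ gives a unique representative in $S^1 \times_g E$ for any $g$-invariant $E$.

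For the easier inclusion $\mbox{st}(S^1 \times_g \mathcal{M}) \supset \mbox{st}(\mathcal{M})$ (in the bundled sense), I would take $x \in \mbox{st}(\mathcal{M})$ and $s \in [0,1)$, and pick an arbitrary $\omega$-limit point $(\overline{r},y)$ of $(\overline{s},g(s)x)$ under $\phi^t$, produced by some sequence $t_k \to \infty$. Writing $t_k = \tau_k + n_k$ via Lemma \ref{lemphisgsx}, so that $\tau_k \in (-1,1)$, $s+\tau_k \in [0,1)$, and $n_k \to \infty$, I get
\[
\phi^{t_k}(\overline{s},g(s)x) = (\overline{s+\tau_k},g(s+\tau_k)g^{n_k}x) \to (\overline{r},y).
\]
Passing to a subsequence I may assume $s+\tau_k \to s_* \in [0,1]$. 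If $s_* \in [0,1)$, continuity of $t \mapsto g(t)^{-1}$ gives $g^{n_k}x \to g(s_*)^{-1}y =: z$, so $z \in \omega(x) \subset \mathcal{M}$ and $(\overline{r},y) = (\overline{s_*},g(s_*)z) \in S^1 \times_g \mathcal{M}$.

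The obstacle in this direction, and the main subtlety of the whole proof, is the boundary case $s_* = 1$, where $\overline{s_*} = \overline{0}$ but $g(s_*) = g \neq g(0) = I$. Here $g^{n_k}x \to g^{-1}y =: z \in \mathcal{M}$, so $y = gz$ and $(\overline{r},y) = (\overline{0}, g(0)(gz))$; this still lies in $S^1 \times_g \mathcal{M}$ because $\mathcal{M}$ is $g$-invariant, and we can absorb the extra $g$ into the fiber coordinate.

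For the reverse inclusion, I take $(\overline{s},g(s)x) \in \mbox{st}(S^1 \times_g \mathcal{M})$ with $s \in [0,1)$ and pick any $z \in \omega(x)$, given by $g^{n_k}x \to z$ with $n_k \in \mathbb{N}$, $n_k \to \infty$. Then by equation (\ref{eqphisgsx}) with $\tau = 0$,
\[
\phi^{n_k}(\overline{s},g(s)x) = (\overline{s},g(s)g^{n_k}x) \to (\overline{s},g(s)z),
\]
so $(\overline{s},g(s)z) \in S^1 \times_g \mathcal{M}$. Writing $(\overline{s},g(s)z) = (\overline{s'},g(s')w)$ with $s' \in [0,1)$ and $w \in \mathcal{M}$, the uniqueness of the representative on $[0,1)$ forces $s' = s$ and hence $z = w \in \mathcal{M}$, showing $x \in \mbox{st}(\mathcal{M})$ and completing the proof.
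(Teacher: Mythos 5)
Your proof is correct, but it follows a genuinely different route from the paper's. The paper never argues from the containment definition of stable sets: it first records the criterion that a point belongs to the stable set of a chain transitive component if and only if its $\omega$-limit set \emph{intersects} that component (this rests on the chain transitivity of $\omega$-limit sets and, on the bundle side, on Corollary \ref{corM}, so that $S^1 \times_g \mathcal{M}$ is itself a chain transitive component of $\phi^t$). With that criterion each inclusion needs only one exhibited sequence: for $S^1 \times_g \mbox{st}(\mathcal{M}) \subset \mbox{st}(S^1 \times_g \mathcal{M})$ the paper pushes $g^{n_k}x \to y \in \mathcal{M}$ up to the bundle by exactly the integer-time computation you use in your \emph{second} part, and for the reverse inclusion it takes a single limit point $(\overline{r},g(r)y)$ with $y \in \mathcal{M}$, replaces $t_k$ by $t_k + s - r$ so that the limit becomes $(\overline{s},g(s)y)$, and extracts $g^{n_k}x \to y$ via Lemma \ref{lemphisgsx}. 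By working instead with containment, you swap which inclusion is hard: your bundle-to-fiber direction is the easy one (integer times plus uniqueness of the $[0,1)$-representative), while your fiber-to-bundle direction must control \emph{every} $\omega$-limit point of $(\overline{s},g(s)x)$, whence your subsequence $s+\tau_k \to s_* \in [0,1]$ and the wrap-around case $s_* = 1$. Your route buys two things. First, generality: you never use that $\mathcal{M}$ is a chain transitive component, only that it is $g$-invariant, so your argument actually proves $\mbox{st}(S^1 \times_g E) = S^1 \times_g \mbox{st}(E)$ for every $g$-invariant $E$, with no input from the chain-recurrence results at all; it is also robust for points whose $\omega$-limit set is empty (quite possible in a noncompact $F$), where the intersection criterion, taken literally, breaks down. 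Second, care at the boundary: the paper's corresponding step asserts that $\overline{s+\tau_k} \to \overline{s}$ together with $s, s+\tau_k \in [0,1)$ forces $s+\tau_k \to s$, which can fail when $s = 0$ (take $s+\tau_k = 1 - 1/k$); this is precisely the wrap-around phenomenon you isolate, and the paper's argument is repaired by the same device you use, absorbing the extra factor of $g$ by invariance of $\mathcal{M}$. What the paper's route buys in exchange is brevity: granting the criterion, both inclusions collapse to one-sequence arguments.
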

\begin{proof}
Since the $\omega$-limit sets are chain transitive, a point
belongs to the stable set of a given chain component if and only
if its omega limit set intersects the chain component. For every
$x \in \mbox{st}(\mathcal{M})$, there exists $n_k \to \infty$ and
$y \in \mathcal{M}$ such that $g^{n_k}x \to y$. In this case, for
each $s \in [0,1)$, we have that
\[
\phi^{n_k}(\overline{s},g(s)x) = (\overline{s},g(s)g^{n_k}x) \to
(\overline{s},g(s)y),
\]
showing that $(\overline{s},g(s)x) \in \mbox{st}(S^1 \times_g
\mathcal{M})$ and that $S^1 \times_g \mbox{st}(\mathcal{M})
\subset \mbox{st}(S^1 \times_g \mathcal{M})$. For the other
inclusion, let $(\overline{s},g(s)x) \in \mbox{st}(S^1 \times_g
\mathcal{M})$, where $s \in [0,1)$. Thus there exists $t_k \to
\infty$, $y \in \mathcal{M}$ and $r \in [0,1)$ such that
\[
\phi^{t_k}(\overline{s},g(s)x) \to (\overline{r},g(r)y).
\]
Hence we have that $\phi^{t_k + s - r}(\overline{s},g(s)x) \to
(\overline{s},g(s)y)$. By Lemma \ref{lemphisgsx}, there exist
$\tau_k \in (-1,1)$ and $n_k \in \mathbb{N}$ such that
\[
t_k + s - r = \tau_k + n_k \qquad \mbox{and} \qquad s+\tau_k \in
[0,1).
\]
Thus we have that $n_k \to \infty$, since $t_k \to \infty$. Using
equation (\ref{eqphisgsx}), we have that
\[
\phi^{t_k + s - r}(\overline{s}, g(s)x) = (\overline{s+\tau_k},
g(s+\tau_k)g^{n_k} x) \to (\overline{s}, g(s)y).
\]
Hence $\overline{s+\tau_k} \to \overline{s}$ and, since $s$ and
$s+\tau_k \in [0,1)$, we have that $s+\tau_k \to s$. Therefore
\[
g^{n_k} x = g(s+\tau_k)^{-1}g(s+\tau_k)g^{n_k} x \to
g(s)^{-1}g(s)y = y,
\]
showing that $x \in \mbox{st}(\mathcal{M})$ and that
$\mbox{st}(S^1 \times_g \mathcal{M}) \subset S^1 \times_g
\mbox{st}(\mathcal{M})$.
\end{proof}

\begin{remark}
\emph{All the results presented in this section remain true if we
replace the Banach space $F$ by an arbitrary compact metrizable
fiber $\mathbb{F}$ where the topological group of bounded
invertible linear operators acts continuously. In fact, in the
definition of chains, we can replace the positive continuous
functions by positive constant functions, once every positive
continuous function defined on a compact space has a positive
minimum. Hence we can replace the inequalities (\ref{eqlipg}),
(\ref{eqlipphi}) and (\ref{eqcx}) by the uniform continuity,
respectively, of the map $(s,x) \mapsto g(s)x$ on $[0,1] \times
\mathbb{F}$, of the map $\phi^u$ on $\mathbb{F}$ and of the map
$(s,x) \mapsto g(s)^{-1}x$ on $[-2,2] \times \mathbb{F}$. }
\end{remark}


\begin{thebibliography}{99}
\bibitem{bsm} C.J.B. Barros  and L.A.B. San Martin: \textit{Chain
transitive sets for flows on flag bundles}. Forum Math.
\textbf{19} (2007), 19-60.

\bibitem{chi} C. Chicone: \textit{Ordinary Differential
Equations with Applications}, Springer, New-York (1999).

\bibitem{ccp}  S. K. Choi, C. Chu and J. S. Park : \
\textit{Chain Recurrent Sets for Flows on Non-Compact Spaces}. J.
Dynam. Differential Equations. \textbf{14} (2002), 597-611.

\bibitem{con}  C. Conley: \textit{Isolated invariant sets and the Morse index}.
CBMS Regional Conf. Ser. in Math., \textbf{38}, American
Mathematical Society, (1978).

\bibitem{hur}  M. Hurley : \
\textit{Chain recurrence, semiflows and gradients}. J. Dynam.
Differential Equations. \textbf{7} (1995), 437-456.

\bibitem{msm} M.\ Patr\~ao, L.\ A.\ B.\ San Martin,
{\em Morse decompositions of semiflows on fiber bundles}, Discrete
Contin.\ Dynam.\ Systems A, \textbf{17} (2007), 561-587.

\bibitem{sel}  Selgrade, J.: \textit{Isolate invariant sets for flows on
vector bundles}. Trans. Amer. Math. Soc., \textbf{203} (1975),
259-390.
\end{thebibliography}
\end{document}